\newtheoremstyle{thm}
{10pt} % Space above
{10pt} % Space below
{} % Body font
{} % Indent amount
{\bfseries} % Theorem head font
{.} % Punctuation after Theorem head
{.5em} % Space after Theorem head
{} % Theorem head spec (can be left empty, meaning `normal')
\newtheorem{tm}{Theorem}[section]
\newtheorem{lem}[tm]{Lemma}
\newtheorem{prop}[tm]{Proposition}
\newtheorem{sta}[tm]{Statement}
\makeatletter\@namedef{subjclassname@2020}{\textup{2020} Mathematics Subject Classification}\makeatother
\begin{document}

\newgeometry{textheight = 22.5 cm, textwidth=13.5 cm, centering}

\newcommand{\kc}{$\Sigma$-compact}
\newcommand{\kcd}{$\Sigma$-\textit{compact}}

\title[A generalization of the Open Mapping Theorem]{A generalization of the Open Mapping Theorem and a possible generalization of the Baire Category Theorem}
\author{Antoni Machowski}

\begin{abstract}
We characterize continuum as the smallest cardinality of a family of compact sets needed to cover a locally compact group for which the Open Mapping Theorem does not hold.
\end{abstract}
\subjclass[2020]{Primary 22D05; Secondary 03E17.}
\keywords{Baire Category Theorem; Lie group; locally compact; Open Mapping Theorem; sigma-compact; topological group.}
\maketitle
\author
\begin{center}
Institute of Mathematics, Jagiellonian University, Stanis\l{}awa \L{}ojasiewicza 6, 30-348 Krak\'ow, Poland
\end{center}

\section{Outline of the problem}

The Baire Category Theorem has multiple applications not only in topology but also in other branches of mathematics. This gives motivation to try to generalize it in multiple ways, one of which would be to consider it for uncountable families of nowhere dense sets. Such generalizations could provide some applications in the theory of cardinal numbers. We denote $\mathfrak{c}:=2^{\aleph_0}$. Consider the following statement.

\begin{sta}
\label{tm:Bairec}
Let $X$ be a Hausdorff topological space that is locally compact or completely metrizable. Then $X$ cannot be covered by less than $\mathfrak{c}$ nowhere dense sets.
\end{sta}

Unfortunately, this statement is undecidable in ZFC. \cite{Hec73} even shows that for any given regular non-denumerable
cardinal $\kappa$ there is a model of ZFC where $\kappa$ is the minimal number of nowhere dense sets needed to cover $\mathbb{R}$.  However, some special cases of Statement \ref{tm:Bairec} can be proved or disproved in ZFC. For example, we know that a nondiscrete uncountable topological group that is locally compact or completely metrizable cannot have cardinality less than $\mathfrak{c}$.

We now recall the well-known Open Mapping Theorem for topological groups. We will provide its classical proof to ilustrate the role that the Baire Category Theorem plays in it.

\begin{tm}
\label{Twr0}
Let $G$ and $H$ be locally compact groups and let $G$ be $\sigma$-compact. Then every continuous surjective group homomorphism $\varphi\colon G\to H$ is open.
\end{tm}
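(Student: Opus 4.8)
The plan is to exploit the fact that in a topological group it suffices to control $\varphi$ near the identity. Writing $e_G$ and $e_H$ for the identities, I first observe that $\varphi$ is open precisely when $\varphi(U)$ is a neighbourhood of $e_H$ for every neighbourhood $U$ of $e_G$: indeed, given an open $W\subseteq G$ and a point $g\in W$, the set $g^{-1}W$ is a neighbourhood of $e_G$, and since left translation by $\varphi(g)$ is a homeomorphism of $H$, the image $\varphi(W)=\varphi(g)\,\varphi(g^{-1}W)$ is a neighbourhood of $\varphi(g)$ as soon as $\varphi(g^{-1}W)$ is a neighbourhood of $e_H$. So the whole theorem reduces to this local statement.

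Next I would produce a convenient neighbourhood to work with. Using local compactness of $G$ together with the continuity of multiplication and inversion, I can choose a \emph{compact} symmetric neighbourhood $V$ of $e_G$ (so $V=V^{-1}$) small enough that $V\cdot V\subseteq U$. The role of $\sigma$-compactness now enters: writing $G=\bigcup_n K_n$ with each $K_n$ compact and covering each $K_n$ by finitely many left translates of the open set $\operatorname{int}V$, I obtain a \emph{countable} family $\{g_j\}$ with $G=\bigcup_j g_jV$. Applying the surjective homomorphism $\varphi$ gives $H=\varphi(G)=\bigcup_j \varphi(g_j)\,\varphi(V)$. The decisive point is that $V$ is compact, so $\varphi(V)$ is compact, hence closed in the Hausdorff group $H$; the same holds for every translate $\varphi(g_j)\varphi(V)$.

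The heart of the argument is then the Baire category step, and this is where I expect the real content to sit. Since $H$ is locally compact Hausdorff it is a Baire space, so the countable closed cover $H=\bigcup_j \varphi(g_j)\varphi(V)$ cannot consist entirely of sets with empty interior; hence some $\varphi(g_{j_0})\varphi(V)$, and therefore $\varphi(V)$ itself (translation being a homeomorphism), has nonempty interior. Finally I would cash this in: picking $v_0\in V$ with $\varphi(v_0)$ in the interior of $\varphi(V)$, left translation by $\varphi(v_0)^{-1}$ carries $e_H$ into the interior of $\varphi(v_0)^{-1}\varphi(V)=\varphi(v_0^{-1}V)\subseteq\varphi(V^{-1}V)=\varphi(V\cdot V)\subseteq\varphi(U)$, so $\varphi(U)$ is a neighbourhood of $e_H$, completing the reduction. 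The main obstacle is really the interplay in this last block: $\sigma$-compactness of $G$ is exactly what makes the cover countable, compactness of $V$ is exactly what makes its pieces closed, and local compactness of $H$ is exactly what supplies the Baire property—remove any one of these and the category argument collapses.
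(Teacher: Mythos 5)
Your proposal is correct and follows essentially the same route as the paper's own proof: reduce openness to a neighbourhood-of-identity statement, use $\sigma$-compactness to get a countable cover $H=\bigcup_j\varphi(g_j)\varphi(V)$, apply the Baire Category Theorem in the locally compact group $H$, and translate an interior point of $\varphi(V)$ back to the identity via symmetry of $V$. If anything, you are slightly more careful than the paper, which does not explicitly take its neighbourhood $C$ compact — your choice of a compact $V$ is exactly what makes $\varphi(V)$ closed and hence makes the Baire step legitimate.
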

\begin{proof}
Let $U$ be an identity neighbourhood in $G$. We pick an identity neighbourhood $C$ in $G$ such that $C=C^{-1}$ and $C\cdot C\subseteq U$. Then we have $G=\underset{n\in\mathbb{N}}{\bigcup}g_n C$ for some $g_n\in G$ for $n\in \mathbb{N}$ and $H=\underset{n\in\mathbb{N}}{\bigcup}\varphi(g_n) \varphi(C)$. By the Baire Category Theorem $\varphi(C)$ has nonempty interior. Let $b\in \text{int}\varphi(C)$. Then $e=bb^{-1}\in \text{int}\varphi(C)\cdot \varphi(C) \subseteq \varphi(C)\cdot \varphi(C) = \varphi(C\cdot C) \subseteq \varphi(U)$, so $\varphi(U)$ is a neighbourhood of $e$ which suffices to finish the proof.
\end{proof}

We say that a topological group $G$ is \kcd\ if it can be covered by fewer than $\mathfrak{c}$ compact subsets. It can be readily seen that a closed subgroup of a \kc\ group is \kc\ and that a quotient of a \kc\ group by a closed normal subgroup is \kc. The goal of this paper is to prove the following statement.

\begin{tm}
\label{Twr1}
Let $G$ and $H$ be locally compact groups and let $G$ be \kc. Then every continuous surjective group homomorphism $\varphi\colon G\to H$ is open.
\end{tm}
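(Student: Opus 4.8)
The plan is to reduce everything to the classical Open Mapping Theorem (Theorem~\ref{Twr0}) except for one genuinely set-theoretic core, which I would isolate as a statement about Haar-null subgroups. First I would pass to the quotient: $\ker\varphi$ is closed, $G/\ker\varphi$ is again locally compact and \kc, and since the projection $G\to G/\ker\varphi$ is open and surjective, $\varphi$ is open if and only if the induced continuous \emph{bijection} $\bar\varphi\colon G/\ker\varphi\to H$ is. So I may assume $\varphi$ is a continuous bijective homomorphism. Next I would reduce the target to the $\sigma$-compact case: every locally compact group has an open $\sigma$-compact subgroup $H_1$ (generated by a compact symmetric neighbourhood of $e$), so $\varphi^{-1}(H_1)$ is open in $G$, hence \kc, and by homogeneity $\varphi$ is open as soon as its restriction $\varphi^{-1}(H_1)\to H_1$ is. Replacing $G,H$ accordingly, I may assume $H$ is $\sigma$-compact (note that $H=\varphi(G)$ is automatically \kc, being a continuous image).

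Now fix a compact symmetric identity neighbourhood $V\subseteq G$ and set $G_*=\langle V\rangle=\bigcup_n V^n$, an open $\sigma$-compact subgroup of $G$. Since $G$ is \kc\ and $G_*$ is open, the index $\lambda:=[G:G_*]$ is below $\mathfrak c$ (a compact set meets only finitely many cosets of an open subgroup). The set $K:=\varphi(G_*)=\bigcup_n\varphi(V^n)$ is a $\sigma$-compact, in particular Borel, subgroup of $H$, and the decomposition $G=\bigsqcup_{i<\lambda} g_iG_*$ is carried to a partition $H=\bigsqcup_{i<\lambda}\varphi(g_i)K$ of $H$ into $\lambda<\mathfrak c$ left cosets of $K$. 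Let $\mu$ be a left Haar measure on $H$. By the Steinhaus--Weil theorem a measurable subgroup is either Haar-null or open: if $\mu(K)>0$ then $KK^{-1}=K$ contains an identity neighbourhood, so $K$ is open and $\varphi|_{G_*}\colon G_*\to K$ is a continuous bijection of $\sigma$-compact locally compact groups, hence open by Theorem~\ref{Twr0}; as $G_*,K$ are open, this makes $\varphi$ open and finishes that case.

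It remains to rule out $\mu(K)=0$, which is the heart of the matter. I would isolate it as the lemma: \emph{a Haar-null Borel subgroup $K$ of a locally compact group $H$ has index at least $\mathfrak c$}. Granting it, $\mu(K)=0$ forces $\lambda=[H:K]\ge\mathfrak c$, contradicting $\lambda<\mathfrak c$; hence only the open case occurs. To prove the lemma I would first reduce to a second-countable (Polish) group: by the Kakutani--Kodaira theorem there is a compact normal $N\lhd H$ with $H/N$ second countable, and the delicate point is to arrange $N\subseteq K$ so that the coset relation and the nullity descend faithfully. Weil's integration formula gives $\mu_H(K)=\mu_N(N)\,\mu_{H/N}(K/N)$ when $N\subseteq K$, so nullity is preserved and $[H:K]=[H/N:K/N]$. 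Once inside a Polish group, the coset relation $E_K=\{(x,y):x^{-1}y\in K\}$ is Borel, so by Silver's dichotomy it has either at most countably many classes or at least $\mathfrak c$; the countable alternative is impossible, since it would present $H$ as a countable union of the null cosets of $K$, contradicting $\mu(H)>0$. Hence $[H:K]\ge\mathfrak c$.

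The main obstacle is precisely this lemma, and it is where the \kc\ hypothesis does work that naive Baire category cannot. One \emph{cannot} argue that $H$ fails to be a union of fewer than $\mathfrak c$ small sets: the covering numbers $\mathrm{cov}(\mathcal M)$ and $\mathrm{cov}(\mathcal N)$ may consistently lie strictly below $\mathfrak c$, so even a $\sigma$-compact group such as $\mathbb R$ can be written as a union of fewer than $\mathfrak c$ nowhere dense (or null) sets. What rescues the argument is the rigid \emph{coset} structure: a Borel null subgroup yields a Borel equivalence relation, and Silver's theorem forbids any index strictly between $\aleph_0$ and $\mathfrak c$. The two technical risks I would watch are (i) securing a compact normal $N\subseteq K$ with second-countable quotient --- if $K$ contains no suitable compact normal subgroup one must instead transfer the dichotomy through the Lie quotient furnished by the Gleason--Yamabe theorem --- and (ii) verifying measurability of $E_K$ and the applicability of Steinhaus--Weil in the merely $\sigma$-compact, possibly non-metrizable, setting.
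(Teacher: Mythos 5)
Your architecture is genuinely different from the paper's and its skeleton is sound: the reduction to a bijection (the paper's Proposition \ref{prop:Kr0}), the passage to $\sigma$-compact $H$ via an open compactly generated subgroup, and the bound $[G:G_*]<\mathfrak c$ (which is exactly the paper's Proposition \ref{prop:Obs1/2}) are all correct, and in the case $\mu(K)>0$ the Steinhaus--Weil argument plus Theorem \ref{Twr0} does close the proof --- your worry (ii) is unfounded, since Weil's theorem holds in any locally compact group for a measurable set of finite positive measure, and inner regularity applied to the $\sigma$-compact set $K$ supplies such a set. In the second-countable case your key lemma is also correct and attractive: the coset relation of a Borel subgroup is Borel, Silver's dichotomy forbids any index strictly between $\aleph_0$ and $\mathfrak c$, and countably many null cosets cannot cover $H$. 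Where the paper runs everything through NSS/Lie structure theory, you substitute descriptive set theory plus Haar measure; for metrizable $H$ this is a legitimate and arguably more transparent alternative route.

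The genuine gap is the non-second-countable case, precisely where you hedge. Your reduction needs a compact normal $N\lhd H$ with $H/N$ Polish \emph{and} $N\subseteq K$, and such $N$ need not exist: if $H$ is not second countable, every Kakutani--Kodaira subgroup $N$ is a non-metrizable compact group, while $K=\varphi(G_*)$ is the continuous injective image of $G_*$ and has no reason to contain any nontrivial compact normal subgroup of $H$ at all (e.g.\ when $G_*$ has no nontrivial compact subgroups); so Weil's integration formula is simply unavailable. The natural repair --- project instead of intersect: $\pi(K)$ is $\sigma$-compact, hence Borel in the Polish group $H/N$, of index $<\mathfrak c$, hence open by Silver plus Baire plus Steinhaus, so $KN$ is open and, after replacing $H$ by $KN$, one has $[H:K]=[N:K\cap N]$ --- only pushes the problem into the compact group $N$: you then need ``a $\sigma$-compact subgroup of index $<\mathfrak c$ in a possibly non-metrizable compact group is open'', and iterating the quotient trick inside $N$ produces an infinite regress, since knowing $\mu\bigl(QM\bigr)>0$ for every metrizable quotient $N/M$ does not yield $\mu(Q)>0$. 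This residual compact case is exactly where the paper expends its heavy machinery: Theorem \ref{tm:Po} on the pro-Lie structure of compact connected groups, the dimension-counting argument of Lemma \ref{lem:Kr4}, and the one-parameter-subgroup argument of Proposition \ref{prop:Aneks}. Your closing phrase about ``transferring the dichotomy through the Lie quotient furnished by the Gleason--Yamabe theorem'' names the hard kernel of the paper's Sections 3--4 rather than supplying it; as written, your proof is complete only for second-countable $H$.
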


Theorems \ref{Twr0} and \ref{Twr1} are obviously equivalent under CH. Our goal will be to prove Theorem \ref{Twr1} under ZFC and thus to provide a generalization of Theorem \ref{Twr0}. Our motivation for this generalization is that it gives a reason to suppose that some result closely related to Statement \ref{tm:Bairec} is true. As we will see in the following sections, the proof of Theorem \ref{Twr1} will use advanced structural theorems in the theory of locally compact groups, so it is not possible to repeat it for other classes of topological groups. It is not known to us whether Theorem \ref{Twr1} holds if we replace ``locally compact'' with ``completely metrizable''. We know however that the \kc\ condition cannot be replaced with a weaker condition admitting families of cardinality $\mathfrak{c}$ of compact sets since the identity map from $\mathbb{R}$ with discrete topology to $\mathbb{R}$ with euclidian topology is not open.

Our goal from now on will be to prove Theorem \ref{Twr1}. We start our efforts by recalling the following classical theorem. We skip its elementary proof.
\begin{prop}
\label{prop:Kr0}
Let $G$ and $H$ be topological groups and let $\varphi\colon G\to H$ be a continuous group epimorphism. Then
\begin{enumerate}[label=(\roman*)]
\item There exists exactly one continuous bijective group homomorphism $\widetilde{\varphi}\colon {G}/{\ker\varphi}\to H$ such that $\widetilde{\varphi} \circ \pi = \varphi$ where $\pi\colon G\to{G}/{\ker\varphi}$ is the quotient map.
\item For $\widetilde{\varphi}$ such as above, $\widetilde{\varphi}$ is open (equivalently, an isomorphism of topological groups) if and only if $\varphi$ is open.
\end{enumerate}
\end{prop}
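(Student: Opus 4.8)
The plan is to reduce the topological content of the statement to two standard facts about the canonical projection $\pi\colon G\to G/\ker\varphi$: that $\pi$ is a continuous open surjection, and that it is a topological quotient map, so that a map out of $G/\ker\varphi$ is continuous precisely when its composite with $\pi$ is continuous. Everything else is bookkeeping with the purely algebraic first isomorphism theorem, so I would organize the argument around the single factorization $\varphi=\widetilde\varphi\circ\pi$.

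For part (i), I would first note that $\ker\varphi$ is a normal subgroup of $G$, so $G/\ker\varphi$ carries a group structure together with the quotient topology making it a topological group and $\pi$ the quotient map. The algebraic first isomorphism theorem then supplies a group homomorphism $\widetilde\varphi$ with $\widetilde\varphi\circ\pi=\varphi$, given on cosets by $\widetilde\varphi(g\ker\varphi)=\varphi(g)$; both well-definedness and injectivity follow from the chain $\varphi(g)=\varphi(g')\iff g^{-1}g'\in\ker\varphi\iff g\ker\varphi=g'\ker\varphi$, while surjectivity is inherited directly from $\varphi$. Uniqueness among all maps satisfying the factorization (not merely continuous ones) is immediate from surjectivity of $\pi$. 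Continuity of $\widetilde\varphi$ is then exactly the universal property of the quotient topology applied to the continuous map $\varphi=\widetilde\varphi\circ\pi$.

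For part (ii), I would again exploit $\varphi=\widetilde\varphi\circ\pi$ together with openness of $\pi$. If $\widetilde\varphi$ is open, then $\varphi$ is a composite of open maps and hence open. Conversely, if $\varphi$ is open, then for any open $V\subseteq G/\ker\varphi$ the preimage $\pi^{-1}(V)$ is open and, since $\pi$ is surjective, $\widetilde\varphi(V)=\widetilde\varphi(\pi(\pi^{-1}(V)))=\varphi(\pi^{-1}(V))$ is open, so $\widetilde\varphi$ is open. The parenthetical equivalence then follows because a continuous bijective homomorphism of topological groups is an isomorphism exactly when its inverse is continuous, which for a bijection is the same as being an open map.

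There is no genuine obstacle in this proposition: it is simply a repackaging of the first isomorphism theorem with the universal property of the quotient topology, which is presumably why the author skips the proof. The only place where a specifically topological (rather than algebraic or categorical) input enters is the openness of $\pi$, namely the standard fact that for a topological group $G$ and a subgroup $N$ the projection $G\to G/N$ is open, which one sees from $\pi^{-1}(\pi(W))=WN=\bigcup_{n\in N}Wn$ for any $W\subseteq G$.
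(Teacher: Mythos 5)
Your proof is correct and complete; note that the paper explicitly skips the proof of this proposition as elementary, so there is nothing to compare against, and your argument (first isomorphism theorem, universal property of the quotient topology, and openness of $\pi$ via $\pi^{-1}(\pi(W))=W\ker\varphi$) is exactly the standard one the author presumably had in mind. The only small point worth making explicit in the setting of this paper, where groups are Hausdorff by convention, is that $\ker\varphi=\varphi^{-1}(\{e\})$ is closed by continuity of $\varphi$, so the quotient $G/\ker\varphi$ is again Hausdorff.
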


We observe as a corollary to the above that it is enough to prove Theorem \ref{Twr1} in the case where $\varphi$ is bijective.

\section{Preliminaries}

In this section we introduce notions and auxiliary facts that will be used throughout the rest of the paper. By a group we mean a Hausdorff topological group. We denote by $G_0$ the identity component of a group $G$. It is a closed normal subgroup of $G$. We say after \cite{HM07} that $G$ is \textit{almost connected} if ${G}/{G_0}$ is compact. We recall the following simple fact.

\begin{lem}
\label{lem:ObsA}
Every almost connected locally compact group is $\sigma$-compact.
\end{lem}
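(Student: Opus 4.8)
The plan is to treat the connected part $G_0$ and the compact quotient $G/G_0$ separately and then to reassemble them. First I would show that every \emph{connected} locally compact group is $\sigma$-compact. Since $G$ is locally compact, so is its identity component $G_0$, and I can fix a compact symmetric identity neighbourhood $V\subseteq G_0$. The set $\bigcup_{n\in\mathbb{N}}V^n$ is then a subgroup of $G_0$ containing the identity neighbourhood $V$, hence it is open, and being open it is also closed. As $G_0$ is connected, this subgroup must equal $G_0$, so $G_0=\bigcup_{n}V^n$ is a countable union of compact sets (each $V^n$ is compact, as a continuous image of $V\times\cdots\times V$), i.e. $G_0$ is $\sigma$-compact.

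Next I would reduce the lemma to the following extension fact: if $N$ is a closed normal subgroup of a locally compact group $G$ such that both $N$ and $G/N$ are $\sigma$-compact, then $G$ is $\sigma$-compact. Applying this with $N=G_0$ completes the proof, since $G_0$ is $\sigma$-compact by the previous step and $G/G_0$ is compact (by almost connectedness), hence trivially $\sigma$-compact. Let $\pi\colon G\to G/N$ denote the quotient map, which is continuous and open, and write $G/N=\bigcup_n K_n$ and $N=\bigcup_m C_m$ with all $K_n,C_m$ compact.

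The heart of the argument is a lifting lemma: every compact $K\subseteq G/N$ is the image under $\pi$ of some compact $L\subseteq G$. To produce $L$, I would fix a compact identity neighbourhood $V\subseteq G$; since $\pi$ is open, $\pi(\operatorname{int}V)$ is an identity neighbourhood in $G/N$, so by compactness $K$ is covered by finitely many translates $x_1\pi(\operatorname{int}V),\dots,x_k\pi(\operatorname{int}V)$. Choosing $g_i\in\pi^{-1}(x_i)$ and setting $L'=\bigcup_{i=1}^k g_iV$ gives a compact set with $\pi(L')\supseteq K$; then $L:=L'\cap\pi^{-1}(K)$ is compact (a closed subset of the compact set $L'$, using that $\pi^{-1}(K)$ is closed because $G/N$ is Hausdorff) and satisfies $\pi(L)=K$. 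From $\pi(L)=K$ one checks directly that $\pi^{-1}(K)=L\cdot N$, so each $\pi^{-1}(K_n)=L_n\cdot N=\bigcup_m L_nC_m$ is $\sigma$-compact; since $G=\bigcup_n\pi^{-1}(K_n)$, the group $G$ is a countable union of compact sets.

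I expect the lifting lemma to be the main obstacle: the reassembly is purely formal once one can pull a compact set in $G/N$ back to a compact set in $G$, and this pullback is exactly where local compactness of $G$ and openness of $\pi$ are used in an essential way.
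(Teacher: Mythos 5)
Your proof is correct, and there is nothing in the paper to compare it against: the paper merely recalls Lemma \ref{lem:ObsA} as a known simple fact and gives no proof at all. Your argument is the standard one, and every step checks out: $G_0$ is closed in $G$, hence locally compact, so a compact symmetric identity neighbourhood $V$ exists and $\bigcup_n V^n$ is an open (hence closed) subgroup of the connected group $G_0$; and your lifting lemma --- pulling a compact $K\subseteq G/N$ back to a compact $L\subseteq G$ with $\pi(L)=K$ via finitely many translates of a compact neighbourhood, using openness of $\pi$ --- is exactly the right tool, correctly executed (including the point that $\pi^{-1}(K)$ is closed so $L=L'\cap\pi^{-1}(K)$ is compact, and the verification $\pi^{-1}(K)=L\cdot N$). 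In fact you prove slightly more than needed, namely the three-space property that $\sigma$-compactness of $N$ and $G/N$ implies that of $G$ for locally compact $G$; for the lemma itself you could shortcut the reassembly by taking $K=G/G_0$ (compact by almost connectedness), so that $G=\pi^{-1}(K)=L\cdot G_0=\bigcup_n L\,V^n$ is immediately a countable union of compact sets. Either way, the proof is complete and self-contained.
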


The next fact is a straightforward corollary of the Van Dantzig Theorem (\cite[Theorem 1.6.7]{Tao14}).

\begin{prop}
\label{prop:VD}
Let $G$ be a locally compact group. Then there exists an open subgroup $\Omega$ of $G$ such that $\Omega$ is almost connected.
\end{prop}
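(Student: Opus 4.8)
The plan is to reduce to the totally disconnected case via the quotient by the identity component and then invoke Van Dantzig directly. First I would observe that $G/G_0$ is a locally compact group that is totally disconnected: quotients of locally compact groups by closed subgroups are locally compact, and the quotient of any topological group by its identity component is totally disconnected. Writing $\pi\colon G \to G/G_0$ for the quotient map, the cited form of the Van Dantzig Theorem then furnishes a compact open subgroup $K$ of $G/G_0$.

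Next I would set $\Omega := \pi^{-1}(K)$. Since $\pi$ is a continuous homomorphism and $K$ is an open subgroup, $\Omega$ is an open subgroup of $G$; moreover $\Omega \supseteq \pi^{-1}(\{e\}) = G_0$. It then remains to verify that $\Omega$ is almost connected, that is, that $\Omega/\Omega_0$ is compact.

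The key point is the identification $\Omega_0 = G_0$. Because $\Omega$ is open it is an identity neighbourhood, so the connected component of the identity computed inside $\Omega$ coincides with the one computed inside $G$: on one hand $G_0 \subseteq \Omega$ is connected and contains $e$, whence $G_0 \subseteq \Omega_0$; on the other hand $\Omega_0$ is a connected subset of $G$ containing $e$, whence $\Omega_0 \subseteq G_0$. Consequently the restriction $\pi|_\Omega\colon \Omega \to K$ is a surjective continuous homomorphism with kernel $G_0 = \Omega_0$, and it induces a topological isomorphism $\Omega/\Omega_0 \cong K$. As $K$ is compact, $\Omega$ is almost connected, which finishes the argument.

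I expect no serious obstacle here beyond bookkeeping. The only points needing a little care are the equality $\Omega_0 = G_0$, which relies on $\Omega$ being \emph{open}, and the check that the algebraic isomorphism $\Omega/\Omega_0 \cong K$ is in fact a homeomorphism; the latter follows since $\pi$ is an open map, so its restriction to the open set $\Omega$ is an open surjective homomorphism onto $K$, and such a map induces a topological isomorphism on the quotient.
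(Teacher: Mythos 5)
Your proposal is correct and follows exactly the route the paper intends: the paper gives no proof, calling the result ``a straightforward corollary of the Van Dantzig Theorem'' \cite[Theorem 1.6.7]{Tao14}, and your argument (pass to the totally disconnected quotient $G/G_0$, pull back a compact open subgroup, identify $\Omega/\Omega_0\cong K$) is precisely that standard derivation. One minor remark: the equality $\Omega_0=G_0$ does not actually need $\Omega$ to be open --- it holds for any subgroup containing $G_0$, since $\Omega_0$ is a connected subset of $G$ containing $e$ and $G_0\subseteq\Omega$ is connected; openness is only needed for $\Omega$ to qualify as the desired subgroup and for the induced map to be a homeomorphism.
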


We say that a topological group $G$ is a \textit{Lie group} if $G$ as a topological space is a smooth manifold. The group $G$ \textit{has no small subgroups} (or is an \textit{NSS group} for short) if there exists an identity neighbourhood $U$ which contains no nontrivial subgroups of $G$. We now invoke a theorem that links these two notions. It was a crucial result in resolving Hilbert's fifth problem and it is very important in our considerations since we will use both the simplicity of the NSS notion and some of the machinery of differential geometry (i.e. the one-parameter subgroups provided by the exponential mapping).

\begin{tm}[\cite{Yam53}, see also {\cite[Chapter 1]{Tao14}}]
\label{tm:NSS}
Let $G$ be a locally compact group. Then $G$ is a Lie group if and only if it has no small subgroups.
\end{tm}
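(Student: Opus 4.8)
The plan is to treat the two implications separately: the forward direction is elementary via the exponential map, while the converse is the deep content of the Gleason--Yamabe solution to Hilbert's fifth problem.

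For the easy direction (if $G$ is a Lie group then it is NSS), I would fix a norm on the Lie algebra $\mathfrak{g}$ and choose $r>0$ so that $\exp$ restricts to a diffeomorphism of the ball $B_r = \{X \in \mathfrak{g} : \|X\| < r\}$ onto an open identity neighbourhood. I claim that $V := \exp(B_{r/2})$ contains no nontrivial subgroup. Indeed, suppose $H \subseteq V$ is a subgroup and $h = \exp(X) \in H$ with $0 < \|X\| < r/2$. Since consecutive multiples $\|nX\| = n\|X\|$ grow in steps strictly smaller than $r/2$ and start below $r/2$, they cannot skip the interval $[r/2,r)$, so there is an $n \ge 1$ with $r/2 \le \|nX\| < r$. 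Using $h^n = \exp(nX)$ and the injectivity of $\exp$ on $B_r$ we get $h^n \in \exp(B_r) \setminus \exp(B_{r/2})$, hence $h^n \notin V$, contradicting $h^n \in H \subseteq V$. Thus $H$ is trivial and $V$ witnesses the NSS property.

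For the hard direction (if $G$ is locally compact and NSS then it is a Lie group) I would follow the Gleason--Yamabe strategy. The core step is the construction of a Gleason metric: a left-invariant metric $d$, built from Haar measure by an averaging/convolution argument producing a suitable ``bump'' function and an associated representation, satisfying both the escape property ($\|g^n\| \gtrsim n\|g\|$ as long as $n\|g\|$ stays small, writing $\|g\| = d(g,e)$) and the commutator estimate ($\|[g,h]\| \lesssim \|g\|\,\|h\|$); its mere existence already shows that $G$ is metrizable. Using these estimates I would attach to each element near the identity a canonical one-parameter subgroup and show that the set $L(G)$ of all one-parameter subgroups carries the structure of a finite-dimensional real vector space --- finite dimensionality being precisely where local compactness and the no-small-subgroups hypothesis combine decisively. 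Finally I would define $\exp \colon L(G) \to G$ and use the escape property for local injectivity, together with a Baker--Campbell--Hausdorff-type expansion for smoothness, to show that $\exp$ is a local homeomorphism carrying a smooth manifold structure onto an identity neighbourhood, making $G$ a Lie group.

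The main obstacle is unquestionably the construction of the Gleason metric and the proof that $L(G)$ is finite-dimensional: this is the technical heart of the resolution of Hilbert's fifth problem and cannot be compressed into a routine computation. Everything afterwards --- promoting a local homeomorphism in exponential coordinates to a genuine Lie group structure --- is comparatively formal once the metric estimates are in hand.
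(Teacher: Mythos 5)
The paper does not prove this theorem at all: it is stated as a known deep result, with the proof deferred to \cite{Yam53} and \cite[Chapter 1]{Tao14}, so there is no internal argument to compare yours against --- the relevant comparison is with the cited sources. Judged on its own terms, your forward direction is a complete and correct proof: the escape argument in exponential coordinates is the standard one, and the details check out --- since the multiples $n\|X\|$ increase in steps $\|X\|<r/2$, the first $n$ with $n\|X\|\geq r/2$ satisfies $n\|X\|<r/2+\|X\|<r$, so $h^n=\exp(nX)\in\exp(B_r)\setminus\exp(B_{r/2})$ by injectivity of $\exp$ on $B_r$, and the degenerate case of a discrete group causes no trouble since then $V=\{e\}$. (One pedantic caveat: the paper defines a Lie group merely as a topological group whose underlying space is a smooth manifold, so strictly speaking the existence of $\exp$ requires the standard convention that the group operations are smooth.)

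Your converse, by contrast, is a roadmap rather than a proof, as you yourself concede: the construction of a Gleason metric from the NSS hypothesis (Gleason's convolution and bump-function lemmas built from Haar measure), the escape and commutator estimates, and the finite-dimensionality of $L(G)$ are named but not established, and these steps are precisely the resolution of Hilbert's fifth problem, occupying most of \cite[Chapter 1]{Tao14}. Your summary of that strategy is faithful --- in particular you correctly locate where local compactness enters (local compactness of $L(G)$ in the compact-open topology, forcing finite dimension) --- but it is a citation-level sketch, not a self-contained argument. This puts your treatment at essentially the same level as the paper's own: the paper invokes the theorem as a black box, and you supplement that black box with a genuine proof only of the elementary direction. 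There is no discrepancy of approach, but it should be clearly understood that the hard implication remains outsourced to the literature in both the paper and your proposal.
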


The following result is the celebrated Gleason--Yamabe Theorem on approximating locally compact almost connected groups by Lie groups.

\begin{tm}[\cite{Gl51}, \cite{Yam53}, see also {\cite[Theorem 1.1.17]{Tao14}}]
\label{tm:GY}
Let $G$ be a locally compact almost connected group. Then for any identity neighbourhood $U$ in $G$ there exists a compact normal subgroup $K$ of $G$ such that $K\subseteq U$ and ${G}/{K}$ is a Lie group.
\end{tm}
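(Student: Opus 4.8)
The plan is to reduce the statement to the no-small-subgroups characterization of Theorem \ref{tm:NSS} and then to carry out the genuinely hard work of producing, inside every identity neighbourhood, a compact normal subgroup whose quotient has no small subgroups. Concretely, since by Theorem \ref{tm:NSS} a locally compact group is a Lie group precisely when it is NSS, and since a quotient of a locally compact group by a compact normal subgroup is again locally compact, it suffices to prove the following: for every identity neighbourhood $U$ there is a compact normal subgroup $K\subseteq U$ such that $G/K$ is NSS. I would fix a compact symmetric identity neighbourhood $V\subseteq U$ and work inside the $\sigma$-compact group $G$ (using Lemma \ref{lem:ObsA}), so that averaging arguments over $G$ are available.

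The core of the argument is the construction of a \emph{Gleason metric} on a suitable quotient, that is, a left-invariant metric $d$ generating the topology near the identity, with norm $\|g\|:=d(g,e)$, satisfying an escape estimate $\|g^n\|\gtrsim n\|g\|$ as long as $n\|g\|$ stays below a fixed threshold, together with a commutator estimate $\|[g,h]\|\lesssim\|g\|\,\|h\|$. I would build such a metric by Gleason's averaging trick: starting from a well-chosen compactly supported function $\phi$ on $G$, one forms a family of control functions by convolution and left translation and defines $\|g\|$ as the supremum over this family of the displacement caused by left multiplication by $g$. The escape property then forces the relevant quotient to be NSS, since a nontrivial subgroup lying in a small ball would have all of its powers trapped in that ball, contradicting escape.

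From a Gleason metric one recovers the Lie structure by manufacturing one-parameter subgroups: for sequences $g_n\to e$ and integers $N_n$ with $N_n\|g_n\|\to t$, the escape and commutator estimates supply enough equicontinuity for the maps $s\mapsto g_n^{\lfloor sN_n\rfloor}$ to subconverge, via an Arzel\`a--Ascoli argument, to a homomorphism $X\colon\mathbb{R}\to G/K$. The set of such one-parameter subgroups carries a vector-space structure whose addition and scalar multiplication are defined through limits of products, making it a finite-dimensional Lie algebra, and the assignment $X\mapsto X(1)$ is a local homeomorphism onto an identity neighbourhood, endowing $G/K$ with a smooth manifold structure compatible with the group operations.

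The main obstacle is the commutator estimate, and with it the passage from an arbitrary locally compact group to one carrying a Gleason metric: producing the control functions with the required quantitative bounds is delicate, since one must simultaneously ensure that the metric generates the topology, descends to the quotient by $K$, and yields the commutator bound, and it is through this analysis that the finiteness of the dimension is ultimately extracted. I expect the almost-connected hypothesis to enter precisely here, through Proposition \ref{prop:VD} and Lemma \ref{lem:ObsA}, guaranteeing the compactness needed for the averaging and the subconvergence to close, and ensuring that $K$ can be taken normal in all of $G$ rather than merely in an open subgroup.
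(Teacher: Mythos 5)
The paper does not prove this statement at all: Theorem \ref{tm:GY} is the Gleason--Yamabe theorem, imported wholesale from \cite{Gl51}, \cite{Yam53} and \cite[Theorem 1.1.17]{Tao14} as one of the ``advanced structural theorems'' the paper explicitly declines to reprove. So your proposal can only be measured against the literature proof, and there it has a genuine gap: in the first paragraph you reduce the theorem to the claim that \emph{for every identity neighbourhood $U$ there is a compact normal subgroup $K\subseteq U$ with $G/K$ an NSS group}, and then you never construct $K$. Everything you describe afterwards --- the Gleason metric with its escape and commutator estimates, the Arzel\`a--Ascoli production of one-parameter subgroups, the finite-dimensional Lie algebra structure, the local homeomorphism $X\mapsto X(1)$ --- is the proof of the NSS-implies-Lie theorem, i.e.\ of Theorem \ref{tm:NSS}, which the paper (and your own reduction) already invokes as known. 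In effect you prove the half you were entitled to cite and cite the half you needed to prove.

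The missing content is substantial and cannot be recovered from the commutator estimate alone. In Tao's treatment one needs, beyond the NSS theory: the Peter--Weyl theorem, which settles the compact case by intersecting kernels of finitely many finite-dimensional unitary representations; a reduction of the almost connected case to the connected and compact cases (here Proposition \ref{prop:VD} and the compactness of $G/G_0$ do enter, but via a nontrivial splicing argument, not merely to ``make averaging available''); and, crucially, the \emph{subgroup trapping} property --- every identity neighbourhood contains a compact set swallowing all sufficiently small subgroups --- from which $K$ is manufactured as (the closure of) the group generated by all small subgroups, with normality extracted afterwards using almost connectedness. Your sketch is in fact circular at this point: for the Gleason metric on $G/K$ to satisfy the escape estimate, $K$ must already contain all small subgroups near the identity, so the metric cannot be used to define $K$. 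Naming the trapping step and the Peter--Weyl input, even without details, is what a faithful outline of Gleason--Yamabe requires; as written, the hardest part of the theorem is assumed rather than proved.
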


We finish this section by recalling a well-known structual theorem (cf. e.g. {\cite[Theorem 9.24(iii)]{HM06}}).
 
\begin{tm}
\label{tm:Po}
Let $S$ be a compact connected group. Then there is a compact central totally disconnected subgroup $Z$ of $S$ such that ${S}/{Z}$ is a (possibly infinite) product of compact connected Lie groups.
\end{tm}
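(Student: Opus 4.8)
The plan is to split $S$ into a central abelian part and a semisimple commutator part, handle each separately, and then reassemble them along their (small, totally disconnected) overlap. Write $Z_0 := Z(S)_0$ for the identity component of the centre and $S' := \overline{[S,S]}$ for the closed commutator subgroup. The structure theory of compact connected groups provides that $S = Z_0 \cdot S'$, that $Z_0$ is a compact connected abelian group, and that $S'$ is a compact connected \emph{semisimple} group, meaning its centre $Z(S')$ is totally disconnected. I would first establish the two decompositions below and then glue them.

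For the abelian factor, the claim reduces to a statement about discrete abelian groups via Pontryagin duality. Since $Z_0$ is compact and connected, its dual $\widehat{Z_0}$ is a discrete torsion-free abelian group. Choosing a maximal $\mathbb{Z}$-independent subset of $\widehat{Z_0}$ yields a free subgroup $F \leq \widehat{Z_0}$ with torsion quotient $\widehat{Z_0}/F$. Dualizing, the annihilator $Z_1 := F^{\perp} \leq Z_0$ is a closed subgroup with $\widehat{Z_1} \cong \widehat{Z_0}/F$ torsion, hence $Z_1$ is totally disconnected, while $Z_0/Z_1 \cong \widehat{F} \cong \mathbb{T}^{\kappa}$ is a product of circles for a suitable cardinal $\kappa$. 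Thus, modulo a totally disconnected central subgroup, $Z_0$ becomes a product of compact connected Lie groups. I would in fact arrange $Z_1 \supseteq Z_0 \cap S'$ by running this argument on $Z_0/(Z_0 \cap S')$ first and pulling back; since $Z_0 \cap S'$ is totally disconnected this keeps $Z_1$ totally disconnected.

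The semisimple factor carries the real weight. Here I would invoke the structure theory of compact connected semisimple groups: combining the Peter--Weyl approximation of $S'$ by compact connected Lie quotients with the classical fact that a compact connected semisimple Lie group is covered, with finite central kernel, by a product of simply connected simple compact Lie groups, and then passing to projective limits, one obtains a family $\{S_j\}_{j \in J}$ of simply connected simple compact Lie groups together with a surjective morphism $p \colon \prod_{j} S_j \to S'$ whose kernel is totally disconnected and central. A short connectedness argument then shows $Z(S') = p\bigl(\prod_j Z(S_j)\bigr)$, so that $S'/Z(S') \cong \prod_j \bigl(S_j/Z(S_j)\bigr)$ is a product of centreless compact connected simple Lie groups, and $Z(S')$ is totally disconnected. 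This globalization from the Lie case to the pro-Lie setting — essentially the content cited from Hofmann--Morris — is the step I expect to be the main obstacle, since it requires the full strength of Peter--Weyl together with the classification and covering theory of compact simple Lie groups.

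With both pieces in hand the assembly is formal. The multiplication map $Z_0 \times S' \to S$ is a surjective morphism whose kernel is $\{(z,z^{-1}) : z \in Z_0 \cap S'\}$, and $Z_0 \cap S' \subseteq Z(S')$ because elements of $Z_0$ are central in $S$. Consider the morphism $\psi \colon Z_0 \times S' \to \mathbb{T}^{\kappa} \times \prod_j \bigl(S_j/Z(S_j)\bigr)$ assembled from the two quotients above. Since I arranged $Z_0 \cap S' \subseteq Z_1$ and already have $Z_0 \cap S' \subseteq Z(S')$, the kernel of $Z_0 \times S' \to S$ lies in $\ker \psi$, so $\psi$ descends to a surjective morphism $\overline{\psi} \colon S \to \mathbb{T}^{\kappa} \times \prod_j \bigl(S_j/Z(S_j)\bigr)$. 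Its kernel $Z := \ker \overline{\psi}$ is a continuous image of the totally disconnected group $Z_1 \times Z(S')$, hence totally disconnected; it is central because $Z_1 \subseteq Z(S)$ and, by another connectedness argument, the connected group $S$ centralizes the totally disconnected characteristic subgroup $Z(S')$ of its normal subgroup $S'$. Therefore $S/Z \cong \mathbb{T}^{\kappa} \times \prod_j \bigl(S_j/Z(S_j)\bigr)$ is a product of compact connected Lie groups, as required.
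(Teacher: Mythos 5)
The paper offers no proof of Theorem \ref{tm:Po} at all: it is recalled as a well-known structure theorem with a pointer to \cite[Theorem 9.24(iii)]{HM06}, so the real comparison is with the textbook proof, which your argument essentially reconstructs. Your skeleton is correct: the decomposition $S = Z(S)_0\cdot\overline{[S,S]}$, the duality treatment of the abelian factor (pick a maximal $\mathbb{Z}$-independent set in the torsion-free discrete dual $\widehat{Z_0}$, so that $Z_1 = F^{\perp}$ has torsion dual $\widehat{Z_0}/F$ and is totally disconnected while $Z_0/Z_1 \cong \widehat{F} \cong \mathbb{T}^{\kappa}$), the care you take to arrange $Z_0\cap S' \subseteq Z_1$ so that $\psi$ descends along the multiplication map, and the computation $\ker\overline{\psi} = Z_1\cdot Z(S')$ all check out. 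Two remarks. First, as you yourself flag, the one genuinely deep ingredient --- the covering $p\colon\prod_j S_j \to S'$ by a product of simply connected compact simple Lie groups with central totally disconnected kernel --- is invoked, not proved; it is the structure theorem for compact connected semisimple groups in \cite{HM06} and is of essentially the same depth as the statement the paper cites, so your write-up is an honest reduction to that input rather than an independent proof, which is entirely reasonable given that the paper itself gives only a citation. Second, one step is stated too loosely: a continuous image of a compact totally disconnected \emph{space} need not be totally disconnected (the Cantor set maps onto $[0,1]$), so ``$Z$ is a continuous image of $Z_1\times Z(S')$, hence totally disconnected'' requires the group structure. It does hold here: since $Z_1\subseteq Z(S)$, the restriction of multiplication to $Z_1\times Z(S')$ is a group homomorphism, so $Z$ is a topological-group quotient of a profinite group and therefore profinite. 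With that repaired (and noting that centrality of $Z(S')$ in $S$ follows even more directly than by your connectedness argument, since $Z(S')$ commutes with $S'$ by definition and with the central $Z_0$, whose product is $S$), the assembly goes through and yields $S/Z \cong \mathbb{T}^{\kappa}\times\prod_j \bigl(S_j/Z(S_j)\bigr)$, a product of compact connected Lie groups, as required.
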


\section{Open Mapping Theorem with Lie target group}

The goal of this section is to prove Theorem \ref{Twr1} with the additional assumption that $H$ is a Lie group. The following auxiliary fact is surely well-known, but we could not find it in the literature. Thus, for the reader's convenience, we present its proof.

\begin{prop}
\label{prop:Aneks}
Let $G$ and $H$ be connected Lie groups and let $\varphi:G\to H$ be a continuous group homomorphism such that $[H:\varphi(G)]<\mathfrak{c}$. Then $\varphi$ is surjective.
\begin{proof}
Assume that $\varphi$ is not surjective. Then there exists a one-parameter subgroup, i.e. a continuous group homomorphism $\tau:\mathbb{R}\to H$ such that
\begin{equation}
\label{tau}
\tau(\mathbb{R})\nsubseteq \varphi(G).
\end{equation}
Let $S=\{(g,t)\in G\times\mathbb{R}:\varphi(g)=\tau(t)\}$. Then $S$ is $\sigma$-compact and in particular $S$ has countably many connected components. Now we observe that $S_0\subseteq G\times \{0\}$, because for the canonical projection $\pi_2:G\times \mathbb{R}\to \mathbb{R}$, $\pi_2(S_0)$ is a connected group, so it can be either $\{0\}$ or $\mathbb{R}$ and the second case is impossible by \eqref{tau}. Since $S$ has countably many connected components, $[S:S_0]$ is countable and so is $\pi_2(S)$. We observe that $\pi_2(S)=\tau^{-1}(\varphi(G))$ and we define $L=\pi_2(S)$. Pick $A\subseteq \mathbb{R}$ that meets each coset of $L$ at a single point. Then $\textbf{card}(A)=\mathfrak{c}$ and the mapping $A\ni a \mapsto \tau(a)\in H$ is one-to-one which means that $\textbf{card}[H:\varphi(G)]=\mathfrak{c}$ and leads to a contradiction.
\end{proof}
\end{prop}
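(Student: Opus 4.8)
The plan is to argue by contradiction, exploiting the fact that a connected Lie group is generated as an abstract group by its one-parameter subgroups (equivalently, by the image of the exponential map, which fills out an identity neighbourhood). Suppose then that $\varphi$ is not surjective, so that $\varphi(G)$ is a proper subgroup of $H$. If every continuous homomorphism $\tau\colon\mathbb{R}\to H$ had image inside $\varphi(G)$, the subgroup they generate — namely all of $H$, by connectedness — would lie in $\varphi(G)$, a contradiction. Hence there exists a one-parameter subgroup $\tau\colon\mathbb{R}\to H$ with $\tau(\mathbb{R})\nsubseteq\varphi(G)$. Producing this escaping $\tau$ is, to my mind, the main point one must get right, and it is exactly where the Lie structure of $H$ is genuinely used: for a general target group there need be no one-parameter subgroup leaving $\varphi(G)$.

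Next I would pass to the fibre product $S=\{(g,t)\in G\times\mathbb{R}:\varphi(g)=\tau(t)\}$, a closed subgroup of $G\times\mathbb{R}$. Since connected Lie groups are second countable, so is $G\times\mathbb{R}$ and therefore $S$; being also locally compact, $S$ is $\sigma$-compact and in particular has only countably many connected components, whence $[S:S_0]$ is countable. The purpose of introducing $S$ is to gain control of the subgroup $L:=\tau^{-1}(\varphi(G))$ of $\mathbb{R}$, which is precisely the image $\pi_2(S)$ under the second projection.

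The key computation is that $L$ is countable, and for this I would examine the identity component $S_0$. Its image $\pi_2(S_0)$ is a connected subgroup of $\mathbb{R}$, hence either $\{0\}$ or all of $\mathbb{R}$; the second alternative would force $\tau(\mathbb{R})\subseteq\varphi(G)$, contradicting the choice of $\tau$, so $\pi_2(S_0)=\{0\}$. Consequently each coset $sS_0$ projects to the single point $\pi_2(s)$, and since there are only countably many such cosets, $L=\pi_2(S)$ is countable.

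Finally I would extract the contradiction by a cardinality count. As $L$ is a countable subgroup of $\mathbb{R}$, it has $\mathfrak{c}$ many cosets; choosing a transversal $A$ that meets each coset of $L$ exactly once gives $\textbf{card}(A)=\mathfrak{c}$. For distinct $a,a'\in A$ we have $a-a'\notin L$, so $\tau(a-a')\notin\varphi(G)$, which means $\tau(a)$ and $\tau(a')$ lie in different cosets of $\varphi(G)$ in $H$. Hence $a\mapsto\tau(a)\varphi(G)$ embeds $A$ into $H/\varphi(G)$, forcing $[H:\varphi(G)]\ge\mathfrak{c}$ and contradicting the hypothesis $[H:\varphi(G)]<\mathfrak{c}$. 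The only delicate ingredients are the existence of the escaping one-parameter subgroup and the second countability of $S$ that makes the component count (and hence the countability of $L$) go through.
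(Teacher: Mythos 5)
Your proof is correct and follows essentially the same route as the paper's: produce a one-parameter subgroup $\tau$ escaping $\varphi(G)$, pass to the fibre product $S\subseteq G\times\mathbb{R}$, show $\pi_2(S_0)=\{0\}$ so that $L=\tau^{-1}(\varphi(G))$ is countable, and conclude $[H:\varphi(G)]\geq\mathfrak{c}$ via a transversal of $L$. You additionally fill in details the paper leaves implicit — the justification that an escaping $\tau$ exists (one-parameter subgroups generate a connected Lie group) and why $S$ is $\sigma$-compact — and your coset-injectivity argument for $a\mapsto\tau(a)\varphi(G)$ is in fact stated more carefully than the paper's.
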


Before we prove this section's main result, we invoke the following fact.

\begin{prop}
\label{prop:Obs1/2}
Let $G$ be a \kc\ group and let $H$ be an open subgroup of $G$. Then $[G:H]$ is smaller than $\mathfrak{c}$.
\begin{proof}
Let $G=\underset{j \in J}{\bigcup}C_j$ where $C_j$ are compact for $j\in J$ and $\textbf{card} (J) < \mathfrak{c}$ and let $C_j \subseteq \overset{n_j}{\underset{i = 1}{\bigcup}}g_i^{(j)}H$. Then we have $G\subseteq \underset{(i,j)\in \widetilde{J}}{\bigcup}g_i^{(j)}H$ for $\widetilde{J}=\{(i,j):j\in J, 1\leq i \leq n_j\}$ and $\textbf{card}(\widetilde{J})<\mathfrak{c}$.
\end{proof}
\end{prop}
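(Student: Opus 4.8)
The plan is to exploit that an open subgroup has open cosets, so that compactness cuts each compact piece of $G$ down to finitely many cosets, after which a cardinal count finishes the argument.

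First I would fix a covering $G=\bigcup_{j\in J}C_j$ witnessing that $G$ is \kc, so that each $C_j$ is compact and $\textbf{card}(J)<\mathfrak{c}$. The key observation is that since $H$ is open, every left coset $gH$ is open as well, being the image of $H$ under the homeomorphism $x\mapsto gx$, and the family of all left cosets of $H$ partitions $G$. Thus for each fixed $j$ the cosets of $H$ cover the compact set $C_j$ by pairwise disjoint open sets, so by compactness only finitely many of them can meet $C_j$; call them $g_1^{(j)}H,\dots,g_{n_j}^{(j)}H$.

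Next, because the sets $C_j$ cover $G$, every coset of $H$ meets some $C_j$ and therefore occurs among the listed cosets $g_i^{(j)}H$. Consequently the index set $\widetilde{J}=\{(i,j):j\in J,\ 1\le i\le n_j\}$ maps onto the set of all cosets of $H$, which gives $[G:H]\le\textbf{card}(\widetilde{J})$.

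Finally I would carry out the cardinal estimate. Since each $n_j$ is finite we have $\textbf{card}(\widetilde{J})\le\textbf{card}(J)\cdot\aleph_0=\max(\textbf{card}(J),\aleph_0)$, and both terms are strictly below $\mathfrak{c}$ because $\aleph_0<\mathfrak{c}$ and $\textbf{card}(J)<\mathfrak{c}$ by assumption; hence $[G:H]<\mathfrak{c}$. The step requiring the most care is precisely this cardinal arithmetic: the bound $\textbf{card}(J)<\mathfrak{c}$ does not on its own control a union of $\textbf{card}(J)$-many finite sets unless one uses that $\mathfrak{c}$ is uncountable, so that adjoining the factor $\aleph_0$ does not push the product up to $\mathfrak{c}$. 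Everything else is a routine application of openness and compactness.
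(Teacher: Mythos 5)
Your proposal is correct and follows essentially the same argument as the paper: cover $G$ by fewer than $\mathfrak{c}$ compact sets, use openness of the cosets of $H$ to reduce each compact piece to finitely many cosets, and conclude with the cardinal bound $\textbf{card}(\widetilde{J})\le\max(\textbf{card}(J),\aleph_0)<\mathfrak{c}$. Your extra care with the cardinal arithmetic (and the explicit remark that a max of two cardinals below $\mathfrak{c}$ stays below $\mathfrak{c}$) only makes explicit what the paper leaves implicit.
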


We are now ready to prove the Open Mapping Theorem with Lie target group.

\begin{lem}
\label{lem:Kr1}
Let $G$ be a locally compact \kc\ group and let $H$ be a Lie group. Then every continuous bijective group homomorphism $\varphi\colon G \to H$ is open.
\begin{proof}
We observe that $G$ is an NSS group. Then $G_0$ is a connected Lie group and by Lemma \ref{lem:ObsA} it is $\sigma$-compact. By Propositions \ref{prop:Aneks} and \ref{prop:Obs1/2} we get that $\varphi(G_0)=H_0$. By Theorem \ref{Twr0}, $\varphi|_{G_0}$ is open as a map from $G_0$ to $H_0$, but both $G_0$ and $H_0$ are open, so $\varphi$ is open.
\end{proof}
\end{lem}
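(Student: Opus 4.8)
The plan is to use the Lie structure on the target to force $G$ itself to be a Lie group, and then to treat the identity component by the classical $\sigma$-compact Open Mapping Theorem (Theorem~\ref{Twr0}). First I would verify that $G$ has no small subgroups. Since $H$ is a Lie group, Theorem~\ref{tm:NSS} supplies an identity neighbourhood $V\subseteq H$ containing no nontrivial subgroup. Because $\varphi$ is an injective homomorphism, any subgroup of $G$ contained in $\varphi^{-1}(V)$ is carried to a subgroup of $H$ inside $V$, hence is trivial, and by injectivity is already trivial in $G$. Thus $\varphi^{-1}(V)$ witnesses the NSS property, and a second application of Theorem~\ref{tm:NSS} shows that the locally compact group $G$ is itself a Lie group. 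In particular $G$ is locally connected, so its identity component $G_0$ is open, and likewise $H_0$ is open in $H$.

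Next I would reduce the problem to the identity components. Both $G_0$ and $H_0$ are connected Lie groups, and continuity of $\varphi$ gives $\varphi(G_0)\subseteq H_0$. To apply Proposition~\ref{prop:Aneks} to $\varphi|_{G_0}\colon G_0\to H_0$ I must check that $[H_0:\varphi(G_0)]<\mathfrak{c}$. Here I would exploit that $G_0$ is open, so Proposition~\ref{prop:Obs1/2} yields $[G:G_0]<\mathfrak{c}$; since $\varphi$ is bijective it induces a bijection between the left cosets of $G_0$ in $G$ and those of $\varphi(G_0)$ in $H$, whence $[H:\varphi(G_0)]=[G:G_0]<\mathfrak{c}$, and a fortiori $[H_0:\varphi(G_0)]<\mathfrak{c}$. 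Proposition~\ref{prop:Aneks} then forces $\varphi(G_0)=H_0$.

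Finally, $G_0$ is connected and locally compact, hence almost connected, so by Lemma~\ref{lem:ObsA} it is $\sigma$-compact. Applying Theorem~\ref{Twr0} to the continuous surjective homomorphism $\varphi|_{G_0}\colon G_0\to H_0$ shows it is open. Since $G_0$ is open in $G$ and $H_0$ is open in $H$, the identity neighbourhoods of $G$ contained in $G_0$ form a base at the identity, and the open map $\varphi|_{G_0}$ sends each of them to an identity neighbourhood of $H_0$, which is again an identity neighbourhood of $H$. Hence $\varphi$ is open at the identity, and a continuous homomorphism open at the identity is open, finishing the argument.

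I expect the main obstacle to be the middle step: noticing that the NSS property transfers across the continuous bijection (so that $G$ is a Lie group at all) and then correctly converting the index bound $[G:G_0]<\mathfrak{c}$ into the hypothesis $[H_0:\varphi(G_0)]<\mathfrak{c}$ demanded by Proposition~\ref{prop:Aneks}. Once surjectivity onto $H_0$ is secured, the passage from openness of $\varphi|_{G_0}$ to openness of $\varphi$ is routine and rests only on the openness of the two identity components.
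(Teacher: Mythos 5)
Your proof is correct and follows essentially the same route as the paper: transfer the NSS property across the injective $\varphi$ to conclude via Theorem~\ref{tm:NSS} that $G$ is a Lie group, combine Propositions~\ref{prop:Obs1/2} and~\ref{prop:Aneks} to obtain $\varphi(G_0)=H_0$, and then apply Theorem~\ref{Twr0} to $\varphi|_{G_0}$, using openness of both identity components to conclude. You merely make explicit the details the paper leaves implicit (the coset bijection giving $[H_0:\varphi(G_0)]<\mathfrak{c}$ and the openness-at-identity argument), all of which check out.
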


\section{Open Mapping Theorem with compact target group}

In this section we will prove Theorem \ref{Twr1} with the additional assumption that $H$ is a compact group. The following result is well-known, but we present its short proof for reader's convenience.

\begin{prop}
\label{prop:ObsF}
Let $X$ be a compact Hausdorff space of cardinality less than $\mathfrak{c}$ such that for every $x,y\in X$ there is a homeomorphism $f\colon X\to X$ satisfying $f(x)=y$. Then $X$ is finite.
\begin{proof}
If $X$ has an isolated point, then every point of $X$ is isolated, so $X$ is discrete and thus finite. If $X$ has no isolated points we can construct a subset of $X$ of cardinality at least $\mathfrak{c}$ using a construction similar to that of the Cantor set in $\mathbb{R}$.
\end{proof}
\end{prop}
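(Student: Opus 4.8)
The plan is to split into two cases according to whether $X$ has an isolated point, using the homogeneity hypothesis to propagate local behaviour across all of $X$. The key observation is simply that homeomorphisms carry isolated points to isolated points, so the assumed transitivity of the homeomorphism group makes the existence of one isolated point an ``all or nothing'' affair.

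First I would treat the case where some $x_0\in X$ is isolated. For any $y\in X$, picking a homeomorphism $f\colon X\to X$ with $f(x_0)=y$ shows that $y$ is isolated as well, since $f$ maps the singleton open set $\{x_0\}$ onto $\{y\}$. Hence every point of $X$ is isolated, i.e.\ $X$ is discrete; being also compact, $X$ is finite, and we are done in this case.

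The remaining case is that $X$ has no isolated points, and here I would derive a contradiction by showing $\textbf{card}(X)\geq\mathfrak{c}$ via a Cantor-type embedding. The engine is a standard binary-splitting scheme indexed by finite binary strings $s\in\{0,1\}^{<\omega}$: I would construct nonempty open sets $U_s$ satisfying $\overline{U_{s0}}\cup\overline{U_{s1}}\subseteq U_s$ and $\overline{U_{s0}}\cap\overline{U_{s1}}=\emptyset$. The construction is inductive with $U_\emptyset=X$. Given a nonempty open $U_s$, the absence of isolated points forces $U_s$ to contain two distinct points; separating them by disjoint open neighbourhoods and then using the regularity of the compact Hausdorff space $X$ to shrink these to open sets whose closures are disjoint and contained in $U_s$ produces the required $U_{s0}$ and $U_{s1}$.

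Finally, for each infinite branch $\sigma\in\{0,1\}^{\omega}$ the closures $\overline{U_{\sigma|n}}$ form a decreasing chain of nonempty closed subsets of the compact space $X$, so by the finite intersection property $\bigcap_n\overline{U_{\sigma|n}}$ is nonempty; choosing a point in each such intersection defines a map $\{0,1\}^{\omega}\to X$. Distinct branches $\sigma\neq\sigma'$ first differ at some coordinate, at which the two governing closures are disjoint, so the chosen points differ; the map is therefore injective and $\textbf{card}(X)\geq 2^{\aleph_0}=\mathfrak{c}$, contradicting the hypothesis. I expect the only delicate step to be the inductive splitting, where one must invoke regularity of $X$ to pass from merely disjoint open neighbourhoods to neighbourhoods with disjoint closures sitting inside $U_s$; the rest is a routine application of compactness.
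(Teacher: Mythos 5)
Your proposal is correct and follows essentially the same route as the paper: homogeneity makes isolatedness an all-or-nothing property (forcing finiteness via compactness in the discrete case), and in the perfect case a Cantor-scheme of binary-splitting open sets with disjoint closures yields an injection of $\{0,1\}^{\omega}$ into $X$. The paper merely sketches this argument, and your write-up supplies the details (regularity for the inductive splitting, compactness for nonempty branch intersections) exactly as intended.
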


Before we prove this section's main result, we prove its two special cases.

\begin{lem}
\label{lem:Kr4}
Let $G$ be a \kc\ Lie group and let $\varphi:G\to S$ be a continuous bijective group homomorphism onto a group $S$ which is the product of compact connected Lie groups. Then $G$ is compact.
\begin{proof}
If $S$ is a finite product of Lie groups, then it is a Lie group and the statement follows from Lemma \ref{lem:Kr1}. Assume that $S$ is an infinite product of non-trivial Lie groups. Then we can pick a finite number of them such that the sum of their dimensions is bigger than the dimension of $G$. Let $S_1$ be the product of those groups. Then $S_1$ is a Lie group of dimension bigger than $G$. It is also a closed subgroup of $S$, so $D=\varphi^{-1}(S_1)$ is a closed subgroup of $G$ and a \kc\ group. By Lemma \ref{lem:Kr1} for $\varphi|_D:D\to S_1$ we have that $D$ is homeomorphic to $S_1$, so $D$ is a Lie group of dimension bigger than the dimension of $G$ and we get a contradiction.
\end{proof}
\end{lem}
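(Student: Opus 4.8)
The plan is to exploit the fact that $G$, being a Lie group, has a finite dimension $n := \dim G$, and to show that $S$ cannot genuinely require infinitely many non-trivial factors; once this is established, $S$ is a compact Lie group and Lemma \ref{lem:Kr1} finishes the job. First I would dispose of the case where $S$ is a finite product of compact connected Lie groups. Then $S$ is itself a compact Lie group, so $\varphi\colon G\to S$ is a continuous bijective homomorphism of a \kc\ group onto a Lie group; by Lemma \ref{lem:Kr1} it is open, hence a homeomorphism, and therefore $G$ is compact.

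For the general case I would argue that only finitely many factors can be non-trivial, after which we fall back to the finite case. Discard the trivial factors and suppose for contradiction that infinitely many remain. Each non-trivial compact connected Lie group has dimension at least $1$, so I can select finitely many of the surviving factors whose dimensions add up to more than $n$; let $S_1$ denote their finite product. Then $S_1$ is a compact Lie group with $\dim S_1 > n$, and it sits inside $S$ as a closed subgroup (the subproduct that is the identity in the remaining coordinates). Consequently $D := \varphi^{-1}(S_1)$ is a closed subgroup of $G$: being closed in a \kc\ group it is again \kc, and being a closed subgroup of a Lie group it is itself a Lie group with $\dim D \le \dim G = n$. Since $\varphi$ is a bijection, $\varphi|_D\colon D\to S_1$ is a continuous bijective homomorphism onto the Lie group $S_1$, so Lemma \ref{lem:Kr1} makes it a homeomorphism. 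Hence $\dim D = \dim S_1 > n \ge \dim D$, a contradiction. Therefore only finitely many factors are non-trivial, $S$ is (isomorphic to) a finite product, and the first case yields the conclusion.

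The technical heart of the argument is the dimension count, and I expect the point requiring the most care to be the claim that $D = \varphi^{-1}(S_1)$ is a finite-dimensional Lie group of dimension at most $\dim G$. This rests on Cartan's closed subgroup theorem, by which a closed subgroup of a Lie group is an embedded Lie subgroup and so cannot have dimension exceeding that of the ambient group; alternatively one notes that $D$, being closed in the locally compact NSS group $G$, is itself locally compact and NSS, hence a Lie group by Theorem \ref{tm:NSS}. Once the inequality $\dim D \le \dim G$ is secured, the clash with the homeomorphism $D\cong S_1$ of strictly larger dimension closes the argument.
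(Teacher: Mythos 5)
Your proposal is correct and follows essentially the same route as the paper: dispose of the finite case via Lemma \ref{lem:Kr1}, then for an infinite product select finitely many non-trivial factors whose total dimension exceeds $\dim G$, pull back their product $S_1$ to the closed \kc\ subgroup $D=\varphi^{-1}(S_1)$, and derive the dimension contradiction from Lemma \ref{lem:Kr1} applied to $\varphi|_D$. Your explicit justification that $D$ is a Lie group (via Cartan's closed subgroup theorem or the NSS property) and your explicit reduction discarding trivial factors are just slightly more detailed renderings of steps the paper leaves implicit.
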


\begin{lem}
\label{lem:Kr3}
Let $G$ be a \kc\ Lie group and let $\varphi:G\to H$ be a continuous bijective group homomorphism onto a compact group $H$. Then $G$ is compact as well.
\begin{proof}
Let us assume that $G$ is not compact. By Proposition \ref{prop:Obs1/2}, $[G:G_0]<\mathfrak{c}$ and thus $[H:\overline{\varphi(G_0)}]\leq[H:\varphi(G_0)]<\mathfrak{c}$. By Proposition \ref{prop:ObsF}, ${H}/{\overline{\varphi(G_0)}}$ is finite, so $\overline{\varphi(G_0)}$ is clopen and $\widetilde{G}:=\varphi^{-1}(\overline{\varphi(G_0)})$ is clopen as well. Then $\widetilde{G}$ has a finite index in $G$, so it cannot be compact. Furthermore, $S:=\overline{\varphi(G_0)}$ is a compact connected group. Let $Z$ be a subgroup of $S$ as in Theorem \ref{tm:Po}. Then $C:=\varphi^{-1}(Z)$ is totally disconnected. Moreover, $C$ is a closed subgroup of $G$, so it is a \kc\ Lie group and its identity component is open, so it is discrete, which means that it is of cardinality less than $\mathfrak{c}$. Then by Proposition \ref{prop:ObsF} $Z$ is finite and so is $C$ and ${\widetilde{G}}/{C}$ is not compact. Finally by applying Lemma \ref{lem:Kr4} to $\psi:{\widetilde{G}}/{C}\ni[g]\mapsto[\varphi(g)]\in{S}/{Z}$ we get that ${\widetilde{G}}/{C}$ is compact and thus $\widetilde{G}$ is compact which leads to a contradiction.
\end{proof} 
\end{lem}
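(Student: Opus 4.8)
The plan is to argue by contradiction: assuming $G$ is not compact, I will manufacture a quotient to which Lemma \ref{lem:Kr4} applies, forcing compactness and producing the contradiction. The overall idea is to first replace $H$ by the compact connected subgroup $\overline{\varphi(G_0)}$, and then to strip away the totally disconnected obstruction supplied by Theorem \ref{tm:Po}, so that the target group becomes a product of compact connected Lie groups.

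First I would pass to the identity component. Since $G$ is a Lie group, $G_0$ is open, so by Proposition \ref{prop:Obs1/2} the index $[G:G_0]$, and hence (as $\varphi$ is a bijection, so it induces a bijection of coset spaces) the index $[H:\varphi(G_0)]$, is smaller than $\mathfrak{c}$. Setting $S:=\overline{\varphi(G_0)}$, a compact connected subgroup of $H$, we still have $[H:S]\leq[H:\varphi(G_0)]<\mathfrak{c}$, so the compact homogeneous space $H/S$ has cardinality below $\mathfrak{c}$ and Proposition \ref{prop:ObsF} makes it finite. Thus $S$ is clopen and $\widetilde{G}:=\varphi^{-1}(S)$ is a clopen, finite-index subgroup of $G$. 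Because $G$ is a finite union of translates of $\widetilde{G}$, the group $G$ is compact if and only if $\widetilde{G}$ is compact; so under the contradiction hypothesis $\widetilde{G}$ is not compact, while $\varphi$ restricts to a continuous bijection $\widetilde{G}\to S$.

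Next I would invoke Theorem \ref{tm:Po} to choose a compact central totally disconnected subgroup $Z\leq S$ with $S/Z$ a product of compact connected Lie groups, and set $C:=\varphi^{-1}(Z)$, a closed normal subgroup of $\widetilde{G}$. The point I expect to be most delicate is controlling $C$: it is a \kc\ Lie group whose identity component maps under $\varphi$ into the totally disconnected group $Z$, hence (by injectivity of $\varphi$) is trivial, so $C$ is discrete. A discrete \kc\ group has cardinality less than $\mathfrak{c}$, since it is a union of fewer than $\mathfrak{c}$ finite sets; therefore $Z\cong C$ is a compact homogeneous space of cardinality below $\mathfrak{c}$, and Proposition \ref{prop:ObsF} forces $Z$, and with it $C$, to be finite. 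This is precisely the step that genuinely exploits the \kc\ hypothesis, through the cardinal bound of Proposition \ref{prop:ObsF}.

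Finally, since $C$ is finite and $\widetilde{G}$ is not compact, the quotient $\widetilde{G}/C$ is still not compact. The induced map $\psi:\widetilde{G}/C\to S/Z$, $[g]\mapsto[\varphi(g)]$, is a continuous bijective homomorphism from a \kc\ Lie group onto a product of compact connected Lie groups, so Lemma \ref{lem:Kr4} yields that $\widetilde{G}/C$ is compact --- the desired contradiction. I expect the main obstacle to be exactly the middle step: guaranteeing that pulling back the totally disconnected centre $Z$ produces a \emph{finite} group, since the whole reduction to Lemma \ref{lem:Kr4} relies on being able to quotient $C$ out without destroying non-compactness.
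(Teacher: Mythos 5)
Your proposal is correct and follows essentially the same route as the paper: reduce to the clopen finite-index subgroup $\widetilde{G}=\varphi^{-1}(\overline{\varphi(G_0)})$ via Propositions \ref{prop:Obs1/2} and \ref{prop:ObsF}, use Theorem \ref{tm:Po} to extract $Z$, show $C=\varphi^{-1}(Z)$ is finite (again via Proposition \ref{prop:ObsF}), and conclude by applying Lemma \ref{lem:Kr4} to the induced map $\widetilde{G}/C\to S/Z$. The only cosmetic difference is how you see that $C$ is discrete (triviality of $C_0$ via injectivity into the totally disconnected $Z$, rather than the paper's remark that a totally disconnected Lie group has open, hence trivial, identity component), which amounts to the same observation.
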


We are now ready to prove the Open Mapping Theorem with compact target group.

\begin{lem}
\label{lem:Kr2}
Let $G$ be a locally compact \kc\ group and let $\varphi:G\to H$ be a continuous bijective group homomorphism onto a compact group $H$. Then $G$ is compact as well.
\begin{proof}
Assume that $G$ is not compact. Let $W$ be an open almost connected subgroup of $G$ (as in Proposition \ref{prop:VD}). Then let $\widetilde{G}=\varphi^{-1}(\overline{\varphi(W)})$ and by arguing as in Lemma \ref{lem:Kr3} (i.e. applying Proposition \ref{prop:ObsF} to the space $H/\overline{\varphi(W)}$ of the left cosets) we get that $\widetilde{G}$ is a clopen subgroup of $G$ with finite index and in particular it is not compact. Let $K$ be a normal compact subgroup of $W$ such that ${W}/{K}$ is a Lie group (as in Theorem \ref{tm:GY}). We get that $\varphi(K)\lhd\varphi(W)$ and since $\varphi(K)$ is closed in $\overline{\varphi(W)}$ we get that $\varphi(K)\lhd\overline{\varphi(W)}$ and $K\lhd \widetilde{G}$. Let $S:={\overline{\varphi(W)}}/{\varphi(K)}$. Then $S$ is compact and for the canonical projection $\pi:\overline{\varphi(W)}\to S$ we get that $\pi\circ\varphi|_{\widetilde{G}}/:\widetilde{G}\to S$ is a continuous group epimorphism with kernel $K$ and thus by Proposition \ref{prop:Kr0} we construct a continuous bijective group homomorphism $\psi:{\widetilde{G}}/{K}\to S$. We observe that ${\widetilde{G}}/{K}$ is a locally compact \kc\ NSS group that is not compact and thus by Lemma \ref{lem:Kr3} we get a contradiction.
\end{proof}
\end{lem}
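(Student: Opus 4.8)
The plan is to argue by contradiction and reduce the problem to the Lie case already settled in Lemma \ref{lem:Kr3}. So suppose $G$ is not compact. The essential difficulty is that $G$ need not be a Lie group, so the first goal is to manufacture, out of $G$, a locally compact \kc\ NSS group --- equivalently a Lie group by Theorem \ref{tm:NSS} --- that still maps continuously and bijectively onto a compact group while remaining non-compact, so that Lemma \ref{lem:Kr3} delivers the contradiction.

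First I would pass to a well-behaved open subgroup. By Proposition \ref{prop:VD} there is an open almost connected subgroup $W\leq G$. Since $W$ is open and $G$ is \kc, Proposition \ref{prop:Obs1/2} gives $[G:W]<\mathfrak{c}$, whence $[H:\varphi(W)]<\mathfrak{c}$ by bijectivity of $\varphi$, and therefore $[H:\overline{\varphi(W)}]<\mathfrak{c}$. The coset space $H/\overline{\varphi(W)}$ is compact, Hausdorff and homogeneous (left translations act transitively), so Proposition \ref{prop:ObsF} forces it to be finite. Hence $\overline{\varphi(W)}$ is a clopen subgroup of finite index in $H$, and its preimage $\widetilde{G}:=\varphi^{-1}(\overline{\varphi(W)})$ is a clopen subgroup of finite index in $G$. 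As $G$ is not compact and $[G:\widetilde{G}]$ is finite, $\widetilde{G}$ is not compact either.

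Next I would flatten $W$ to a Lie group. Applying the Gleason--Yamabe Theorem \ref{tm:GY} to the locally compact almost connected group $W$ yields a compact normal subgroup $K\lhd W$ with $W/K$ a Lie group. The delicate point is to upgrade this normality to all of $\widetilde{G}$: since $\varphi(K)$ is compact, hence closed in $\overline{\varphi(W)}$, and is normalized by the dense subgroup $\varphi(W)$, a standard continuity-of-conjugation argument shows $\varphi(K)\lhd\overline{\varphi(W)}$, and pulling back through the abstract isomorphism $\varphi$ gives $K\lhd\widetilde{G}$. Setting $S:=\overline{\varphi(W)}/\varphi(K)$, which is compact, the composite $\pi\circ\varphi$ of $\varphi|_{\widetilde{G}}\colon\widetilde{G}\to\overline{\varphi(W)}$ with the projection $\pi\colon\overline{\varphi(W)}\to S$ is a continuous epimorphism whose kernel is exactly $K$ (again using bijectivity of $\varphi$), so Proposition \ref{prop:Kr0} produces a continuous bijective homomorphism $\psi\colon\widetilde{G}/K\to S$.

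Finally I would verify that $\widetilde{G}/K$ meets the hypotheses of Lemma \ref{lem:Kr3}. It is locally compact (a quotient by a compact subgroup), it is \kc\ (a closed subgroup of the \kc\ group $G$, then quotiented by a closed normal subgroup), and, since $W/K$ is an open Lie subgroup, it contains a small-subgroup-free identity neighbourhood and is therefore a Lie group by Theorem \ref{tm:NSS}. Moreover $\widetilde{G}/K$ is not compact, for otherwise --- $K$ being compact --- $\widetilde{G}$ would be compact. Applying Lemma \ref{lem:Kr3} to $\psi\colon\widetilde{G}/K\to S$ then forces $\widetilde{G}/K$ to be compact, the desired contradiction. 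I expect the main obstacle to be the bookkeeping around the closure $\overline{\varphi(W)}$: checking that $\varphi(K)$ remains normal after passing to the closure, and that the finite-index and NSS properties survive along the whole chain of subgroups and quotients.
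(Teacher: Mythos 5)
Your proposal is correct and follows essentially the same route as the paper's proof: the same reduction via Proposition \ref{prop:VD}, Propositions \ref{prop:Obs1/2} and \ref{prop:ObsF} applied to $H/\overline{\varphi(W)}$, the Gleason--Yamabe subgroup $K$, the normality upgrade $\varphi(K)\lhd\overline{\varphi(W)}$, and the final application of Lemma \ref{lem:Kr3} to $\psi\colon\widetilde{G}/K\to S$. You merely make explicit two points the paper leaves terse --- the density/closedness argument for normality and the NSS verification via the open Lie subgroup $W/K$ --- both of which are sound.
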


\section{Proof of the Open Mapping Theorem}

In this section we prove Theorem \ref{Twr1}, first in the case where $H$ is almost connected and then in the general case.

\begin{lem}
\label{lem:LL}
Let $G$, $H$ be locally compact groups such that $G$ is \kc\ and $H$ is almost connected and let $\varphi:G\to H$ be a continuous bijective group homomorphism. Then $\varphi$ is an isomorphism of topological groups.
\begin{proof}
Let $K$ be a compact normal subgroup of $H$ such that $S={H}/{K}$ is a Lie group (as in Theorem \ref{tm:GY}). Let $L=\varphi^{-1}(K)$. Then $L$ is a closed normal subgroup of $G$ and is \kc. From Lemma \ref{lem:Kr2} for $\varphi|_L:L\to K$ we get that $L$ is compact. Take any net $\{x_\sigma\}_{\sigma\in\Sigma}$ in $G$ such that $\varphi(x_\sigma)$ converges to the identity in $H$. To prove that $\varphi$ is open we want to check that $x_\sigma$ converges to the identity in $G$. It is enough to show that some subnet of $\{x_\sigma\}_{\sigma\in\Sigma}$ converges to the identity in $G$. For $\pi:H\to S$, $\pi\circ\varphi:G\to S$ is open from Lemma \ref{lem:Kr1}. Moreover $\pi\circ\varphi(x_\sigma)$ converges to the identity in $S$. Let $U$ be an identity neighbourhood in $G$ such that $\overline{U}$ is compact. We observe that $\pi\circ\varphi(U)$ is an identity neighbourhood in $S$, so $x_\sigma\in U\cdot \ker\pi\circ\varphi=U\cdot L \subseteq \overline{U}\cdot L$ for large enough $\sigma$ and thus $\{x_\sigma\}_{\sigma\in\Sigma}$ has a subnet $\{y_\lambda\}_{\lambda\in\Lambda}$ convergent in $\overline{U}\cdot L$ and since $\varphi(x_\sigma)$ converges to the identity in $H$ and $\varphi$ is a group monomorphism, $y_\lambda$ converges to the identity in $G$.
\end{proof}
\end{lem}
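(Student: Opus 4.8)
The plan is to reduce the problem to the two special cases already established --- the Lie target case (Lemma \ref{lem:Kr1}) and the compact target case (Lemma \ref{lem:Kr2}) --- by using the Gleason--Yamabe Theorem to approximate $H$ by a Lie group. Since $\varphi$ is a continuous bijection, proving that it is an isomorphism of topological groups amounts to proving that it is open, equivalently that $\varphi^{-1}$ is continuous, and I would verify the latter through a net-convergence criterion.

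First I would apply Theorem \ref{tm:GY} to the almost connected locally compact group $H$ to obtain a compact normal subgroup $K\lhd H$ such that $S:=H/K$ is a Lie group. Setting $L:=\varphi^{-1}(K)$, the set $L$ is a closed normal subgroup of $G$, hence itself \kc, and $\varphi|_L\colon L\to K$ is a continuous bijection onto the compact group $K$. By Lemma \ref{lem:Kr2} this forces $L$ to be compact, which is the payoff of the compact-target case. At the same time, writing $\pi\colon H\to S$ for the quotient map, the composite $\pi\circ\varphi\colon G\to S$ is a continuous surjection onto a Lie group; passing to the induced continuous bijection $G/L\to S$ (where $G/L$ is again locally compact and \kc) and invoking Lemma \ref{lem:Kr1}, this composite is open.

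With these two facts in hand I would establish continuity of $\varphi^{-1}$ by a compactness argument. Take a net $(x_\sigma)$ in $G$ with $\varphi(x_\sigma)\to e$ in $H$; it suffices to exhibit a subnet converging to $e$ in $G$. Fixing an identity neighbourhood $U$ in $G$ with $\overline{U}$ compact, openness of $\pi\circ\varphi$ makes $\pi\circ\varphi(U)$ an identity neighbourhood in $S$, and since $\pi\circ\varphi(x_\sigma)\to e$ in $S$ we obtain $x_\sigma\in U\cdot\ker(\pi\circ\varphi)=U\cdot L\subseteq\overline{U}\cdot L$ for large $\sigma$. As $L$ is compact, $\overline{U}\cdot L$ is compact, so the tail of the net lies in a compact set and admits a convergent subnet $y_\lambda\to y$. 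Continuity of $\varphi$ together with $\varphi(x_\sigma)\to e$ gives $\varphi(y)=e$, whence $y=e$ by injectivity, completing the argument.

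The main obstacle I anticipate is precisely this final synthesis: neither the compactness of the fibre $L$ nor the openness of the Lie-group quotient $\pi\circ\varphi$ suffices on its own, and it is their interplay --- confining the net to the compact set $\overline{U}\cdot L$ by exploiting local compactness of $G$ --- that upgrades the two partial results into genuine openness of $\varphi$.
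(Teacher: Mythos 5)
Your proposal is correct and follows essentially the same route as the paper: Gleason--Yamabe to obtain $K$ and $S=H/K$, Lemma \ref{lem:Kr2} to make $L=\varphi^{-1}(K)$ compact, openness of $\pi\circ\varphi$ via Lemma \ref{lem:Kr1}, and confinement of the tail of the net in the compact set $\overline{U}\cdot L$ to extract a subnet converging to the identity. You are in fact slightly more careful than the paper at one point: you explicitly pass to the induced continuous bijection $G/L\to S$ before invoking Lemma \ref{lem:Kr1}, whereas the paper applies that lemma (stated only for bijective homomorphisms) directly to the non-injective map $\pi\circ\varphi$.
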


\proof[Proof of Theorem \ref{Twr1}] Let $W$ be an open almost connected open subgroup of $H$ (as in Proposition \ref{prop:VD}). Then $\varphi^{-1}(W)$ is an open subgroup of $G$, so $\varphi$ is open if and only if $\varphi|_{\varphi^{-1}(W)}$ is open and the latter is true by Lemma \ref{lem:LL}.

As a sidenote, it is worth mentioning that Theorem \ref{Twr1} is easily equivalent to the Closed Graph Theorem for locally compact \kc\ groups which reads as follows:

\begin{tm}
\label{thm:CGT}
Let $G$, $H$ be locally compact \kc\ groups and let $\varphi:G\to H$ be a group homomorphism. Then $\varphi$ is continuous if and only if its graph is closed in $G\times H$.
\end{tm}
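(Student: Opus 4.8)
The plan is to derive the Closed Graph Theorem from the Open Mapping Theorem (Theorem~\ref{Twr1}) by the standard graph-map trick, adapted to the \kc\ setting. The key observation is that the graph $\Gamma = \{(g,\varphi(g)) : g \in G\}$, when it is closed, is itself a locally compact \kc\ group, and the two coordinate projections restricted to $\Gamma$ are continuous group homomorphisms to which the Open Mapping Theorem applies.

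First I would dispose of the easy direction: if $\varphi$ is continuous, then since $H$ is Hausdorff, the graph $\Gamma$ is closed in $G\times H$ by the usual argument (the map $g\mapsto(\varphi(g),g)$ is continuous and $\Gamma$ is the preimage of the closed diagonal under $(g,h)\mapsto(\varphi(g),h)$). This requires no \kc\ hypothesis at all. For the substantive direction, assume $\Gamma$ is closed. Then $\Gamma$ is a closed subgroup of $G\times H$; since $G$ and $H$ are both locally compact, so is $G\times H$, and hence $\Gamma$ is locally compact. Since $G$ and $H$ are both \kc, the product $G\times H$ is \kc\ (a product of two families of compact sets of cardinality $<\mathfrak{c}$ is again indexed by a set of cardinality $<\mathfrak{c}$, as $\mathfrak{c}$ is an uncountable cardinal closed under products of smaller cardinals), and as noted in the excerpt a closed subgroup of a \kc\ group is \kc; thus $\Gamma$ is a locally compact \kc\ group.

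Next I would consider the first coordinate projection $p\colon \Gamma \to G$, $p(g,\varphi(g)) = g$. This is a continuous group homomorphism, and it is clearly a bijection onto $G$ with inverse $g \mapsto (g,\varphi(g))$. By Theorem~\ref{Twr1} applied to $p$ (both $\Gamma$ and $G$ are locally compact and $\Gamma$ is \kc), the map $p$ is open, hence $p$ is an isomorphism of topological groups and its inverse $\iota\colon g \mapsto (g,\varphi(g))$ is continuous. Finally, composing $\iota$ with the second coordinate projection $q\colon \Gamma \to H$, $q(g,\varphi(g)) = \varphi(g)$, which is continuous, we obtain that $\varphi = q \circ \iota$ is continuous, as desired.

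I do not expect any serious obstacle here; the argument is routine once Theorem~\ref{Twr1} is in hand. The only point deserving a moment's care is the verification that $G\times H$ is \kc, i.e. that the class of \kc\ groups is closed under finite products. This follows because if $G = \bigcup_{i\in I} A_i$ and $H = \bigcup_{j\in J} B_j$ with the $A_i, B_j$ compact and $\mathbf{card}(I), \mathbf{card}(J) < \mathfrak{c}$, then $G\times H = \bigcup_{(i,j)\in I\times J} A_i \times B_j$ is a union of compact sets indexed by $I\times J$, and $\mathbf{card}(I\times J) = \max(\mathbf{card}(I),\mathbf{card}(J)) < \mathfrak{c}$ since $\mathfrak{c}$ is an infinite cardinal. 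With this in place, the equivalence is immediate, which justifies describing it as a sidenote rather than a separate theorem requiring its own machinery.
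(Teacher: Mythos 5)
Your proposal is correct, and since the paper states Theorem~\ref{thm:CGT} only as an unproved sidenote (``easily equivalent'' to Theorem~\ref{Twr1}), your graph-trick derivation --- $\Gamma$ closed, hence a locally compact \kc\ subgroup of $G\times H$, apply Theorem~\ref{Twr1} to the bijective projection $p\colon\Gamma\to G$, then compose $p^{-1}$ with the second projection --- is precisely the routine argument the paper alludes to, with the two supporting checks (a product of two \kc\ groups is \kc, and the easy Hausdorff direction) handled correctly.
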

We end the article by posing the following questions:

\textit{Can Theorem \ref{Twr0} be proven without the use of structural results for locally compact groups?}

\textit{Is there a result in spirit of Statement \ref{tm:Bairec} that can be proven in ZFC?}

\end{document}